\newcommand{\C}{\mathbb C}
\newcommand{\HH}{{\cal H}}
\newcommand{\PP}{{\mathbb P}}
\newcommand{\RR}{{\cal R}}
\newcommand{\T}{{\cal T}}
\newcommand{\U}{{\cal U}}
\newcommand{\Tn}{\T_n^{-1}([-1,1])}
\newcommand{\re}{\operatorname{Re}}
\newcommand{\e}{{\operatorname{e}}}
\newcommand{\ii}{{\operatorname{i}}}
\newcommand{\dd}{{\operatorname{d}}}
\newcommand{\CAP}{\operatorname{cap}}
\newcommand{\ov}{\overline}
\begin{document}


\title{The P\'olya-Chebotarev Problem and Inverse Polynomial Images\footnote{published in: Acta Mathematica Hungarica {\bf ??} (2013), ??--??.}}
\author{Klaus Schiefermayr\footnote{University of Applied Sciences Upper Austria, School of Engineering and Environmental Sciences, Stelzhamerstr.\,23, 4600 Wels, Austria, \textsc{klaus.schiefermayr@fh-wels.at}}}
\maketitle

\theoremstyle{plain}
\newtheorem{theorem}{Theorem}
\newtheorem{conjecture}{Conjecture}
\newtheorem{lemma}{Lemma}
\theoremstyle{definition}
\newtheorem*{remark}{Remark}
\newtheorem*{example}{Example}

\begin{abstract}
Consider the problem, usually called the P\'olya-Chebotarev problem, of finding a continuum in the complex plane including some given points such that the logarithmic capacity of this continuum is minimal. We prove that each connected inverse image $\T_n^{-1}([-1,1])$ of a polynomial $\T_n$ is always the solution of a certain P\'olya-Chebotarev problem. By solving a nonlinear system of equations for the zeros of $\T_n^2-1$, we are able to construct polynomials $\T_n$ with a connected inverse image.
\end{abstract}

\noindent\emph{Mathematics Subject Classification (2000):} 30C10, 41A21

\noindent\emph{Keywords:} Analytic Jordan arc, Inverse polynomial image, Logarithmic capacity, P\'olya-Chebotarev problem

\section{The P\'olya-Chebotarev Problem}


Let us introduce an extremal problem concerning the logarithmic capacity usually called the ``P\'olya-Chebotarev problem'' or the ``Chebotarev problem'', since Chebotarev mentioned the problem in a letter to P\'olya\,\cite{Polya-1929}.
\begin{enumerate}
\item[] \emph{P\'olya-Chebotarev problem}: Given $\nu$ distinct points $c_1,c_2,\ldots,c_{\nu}\in\C$ in the complex plane, find a \emph{continuum} $S$, i.e.\ $S$ is \emph{connected}, with the property that $c_1,\ldots,c_{\nu}\in{S}$, such that the logarithmic capacity $\CAP{S}$ is minimal.
\end{enumerate}
Existence and uniqueness of such a set $S$ is proved in \cite{Groetzsch-1930}. We will call the minimal set $S$ the \emph{P\'olya-Chebotarev continuum} for $\{c_1,c_2,\ldots,c_{\nu}\}$.

The case of $\nu=2$ points $c_1,c_2$ is trivial: in this situation the P\'olya-Chebotarev continuum is just the segment $[c_1,c_2]$.

In \cite{OrtegaPridhnani-2010}, the authors gave a short history of the P\'olya-Chebotarev problem and a description of the solution which can be implemented numerically. Fedorov~\cite{Fedorov-1985} gave a complete solution of the P\'olya-Chebotarev problem for $\nu=3$ and $\nu=4$ points (only a symmetric case) with the help of the Jacobian elliptic functions, see also \cite{Sch-2013a}.


In the following, let us recall the well-known representation of the P\'olya-Chebotarev continuum with the help of a hyperelliptic integral, see, e.g., \cite{GrassmannRokne-1975} or \cite{GrassmannRokne-1978}.


\begin{theorem}\label{T-Chebotarev}
Let $c_1,c_2,\ldots,c_{\nu}\in\C$ be $\nu\geq3$ given pairwise distinct complex points. If there exists $\nu-2$ points $d_1,\ldots,d_{\nu-2}\in\C$ with
\begin{equation}
\begin{aligned}
\re\bigl\{\Phi(c_j)\bigr\}&=0,\qquad{j}=1,2,\ldots,\nu,\\
\re\bigl\{\Phi(d_j)\bigr\}&=0,\qquad{j}=1,2,\ldots,\nu-2,
\end{aligned}
\end{equation}
where $\Phi(z)$ is the hyperelliptic integral
\begin{equation}\label{Phi}
\Phi(z)=\int_{c_1}^{z}\frac{\sqrt{\prod_{j=1}^{\nu-2}(w-d_j)}}{\sqrt{\prod_{j=1}^{\nu}(w-c_j)}}\,\dd{w},
\end{equation}
then the set $S$ defined by
\begin{equation}
S:=\bigl\{z\in\C:\re\{\Phi(z)\}=0\bigr\},
\end{equation}
is the P\'olya-Chebotarev continuum for $\{c_1,c_2,\ldots,c_{\nu}\}$.
\end{theorem}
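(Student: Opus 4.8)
The plan is to establish that the set $S=\{z:\re\Phi(z)=0\}$ is a continuum containing all the $c_j$, and then to show its logarithmic capacity cannot be beaten by any competing continuum. First I would analyze the local structure of $S$ near the branch points. Away from the zeros of the integrand and from the $c_j$, $\re\Phi$ is a nonconstant harmonic function, so its zero set is locally a smooth analytic arc. At a point $c_j$ (a simple zero of the radicand denominator), the integrand behaves like $(w-c_j)^{-1/2}$ times an analytic nonvanishing factor, so $\Phi(z)-\Phi(c_j)$ behaves like a constant times $(z-c_j)^{1/2}$; since $\re\Phi(c_j)=0$, exactly one analytic arc of $S$ emanates from each $c_j$. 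At a point $d_j$ (a simple zero of the numerator radicand), $\Phi(z)-\Phi(d_j)$ behaves like a constant times $(z-d_j)^{3/2}$, so three arcs of $S$ meet at $d_j$ at equal angles. The condition $\re\Phi(d_j)=0$ is exactly what forces these arcs to belong to $S$. Counting endpoints and trivalent vertices via an Euler-characteristic / valence argument, together with the fact that $\re\Phi$ has no other critical points in $\C$, I would conclude that $S$ is a finite connected graph with the $\nu$ points $c_j$ as its only free endpoints (i.e.\ a ``tree'' with some trivalent Steiner points $d_j$) — in particular $S$ is a continuum containing $c_1,\dots,c_\nu$.

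Next I would identify the capacity of $S$ and set up the variational characterization. The key object is the complex Green's function with pole at infinity: one shows that $\Phi$ (suitably normalized, perhaps after multiplying by a constant) is precisely the complexified Green's function $g_{\C\setminus S}(z,\infty)+\ii\,\tilde g$ for the complement of $S$, because $\re\Phi$ is harmonic off $S$, vanishes on $S$, and has the right logarithmic growth $\log|z|$ at infinity (this is where the asymptotic expansion of the hyperelliptic integral $\Phi(z)=\log z + \text{const} + O(1/z)$ enters, using that the numerator and denominator radicands differ in degree by $2$). From the constant term in that expansion one reads off $\log(1/\CAP S)$, i.e.\ $\CAP S = \lim_{z\to\infty} z\,\e^{-\Phi(z)}$.

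Then comes the optimality argument. Let $T$ be any competing continuum with $c_1,\dots,c_\nu\in T$; I must show $\CAP T\ge\CAP S$. The standard route is via the Green's function / harmonic measure and an application of the maximum principle or a direct potential-theoretic inequality: one compares $g_{\C\setminus S}(\cdot,\infty)$ with $g_{\C\setminus T}(\cdot,\infty)$. The decisive structural fact — which the conditions $\re\Phi(d_j)=0$ and $\re\Phi(c_j)=0$ are designed to guarantee — is that on each open analytic arc of $S$ the \emph{two} normal derivatives of $\re\Phi$ from the two sides are equal and opposite, so that the equilibrium measure $\dd\mu_S$ (whose potential is $-g_{\C\setminus S}(\cdot,\infty)+\log(1/\CAP S)$) has a density with no singular part except the integrable $|z-c_j|^{-1/2}$ behavior at the endpoints; equivalently, $S$ is a ``symmetric'' (or ``$S$-property'') contour in the sense of Stahl and Gonchar–Rakhmanov. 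One then invokes (or reproves) the known fact that a symmetric continuum joining a prescribed finite point set is the unique minimizer of logarithmic capacity: for any competitor $T$, writing $u=g_{\C\setminus T}(\cdot,\infty)$ and using that $u\ge 0$ with $u=0$ on $T\supseteq\{c_j\}$, an integration-by-parts / Green's-formula computation against $g_{\C\setminus S}$ on $\C\setminus(S\cup T)$, exploiting the $S$-property so that the boundary terms on $S$ have a sign, yields $\log(1/\CAP T)\le\log(1/\CAP S)$, with equality only if $T=S$.

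The main obstacle I anticipate is the last step: turning the $S$-property (the balanced normal-derivative condition encoded by $\re\Phi(c_j)=\re\Phi(d_j)=0$) into the capacity inequality with a clean sign in the boundary integral, and handling the mild singularities of the densities at the endpoints $c_j$ and the trivalent points $d_j$ so that Green's formula is legitimately applicable. A secondary technical point is the global topology argument showing $S$ is connected with exactly the prescribed endpoints — ruling out stray extra components or extra unbounded branches — which requires the precise count of zeros of $\Phi'$ (namely $\nu-2$, coming from the $d_j$) against the number of branch points of the two-sheeted Riemann surface. Both of these are where I would expect the author to either cite Grötzsch/Grassmann–Rokne or supply the careful argument.
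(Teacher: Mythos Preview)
The paper does not actually prove this theorem: it is stated as a recalled, well-known result, with the proof deferred to the references \cite{GrassmannRokne-1975}, \cite{GrassmannRokne-1978} (and the remark immediately following the theorem cites \cite{NuttallSingh-1977} for the fact that $\re\Phi$ is the Green function of $\overline{\C}\setminus S$). So there is no in-paper proof to compare your proposal against; your own closing sentence anticipated exactly this.

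That said, your outline is the standard and correct one, and it matches what one finds in the cited literature: local analysis of $\re\Phi$ near the $c_j$ and $d_j$ to identify $S$ as a tree with $\nu$ endpoints and $\nu-2$ trivalent vertices; identification of $\re\Phi$ with the Green function with pole at $\infty$ via its harmonicity, boundary values, and the asymptotics $\Phi(z)=\log z+O(1)$; and finally the optimality step via the symmetry ($S$-)property of the equilibrium measure. The one place where your sketch is a little loose is the optimality inequality itself: the clean modern formulation is Stahl's theorem (a continuum with the $S$-property through prescribed points minimizes capacity among all such continua), and invoking that directly is cleaner than trying to push through an ad hoc Green's-formula argument with sign control on the boundary terms. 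If you wanted a self-contained proof you would in effect be reproving Stahl's result, which is precisely why the paper simply cites the classical sources instead.
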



\begin{remark}
\begin{enumerate}
\item Let us mention that the function $\re\{\Phi(z)\}$ is harmonic and single valued in $\ov\C\setminus{S}$ and is the unique Green function with pole at $\infty$ for $\C\setminus{S}$, see \cite{NuttallSingh-1977}. For the definition and many properties of the Green function and the logarithmic capacity of a set $S$ in the complex plane, we refer to \cite{Ransford-1995} and \cite{Ransford-2010}.
\item We will call the complex points $d_1,\ldots,d_{\nu-2}$, which appear in formula \eqref{Phi}, the \emph{bifurcation points} of the P\'olya-Chebotarev continuum $S$. If a point $d^*\in\C$ is a zero of multiplicity $k$ of the polynomial $\prod_{j=1}^{\nu-2}(w-d_j)$ then $d^*$ is called a bifurcation point of multiplicity $k$. For a typical P\'olya-Chebotarev continuum for $\nu=5$ points, see Fig.\,\ref{Fig_ChebotarevSet-0}.
\end{enumerate}
\end{remark}


\begin{figure}[ht]
\begin{center}
\includegraphics[scale=0.8]{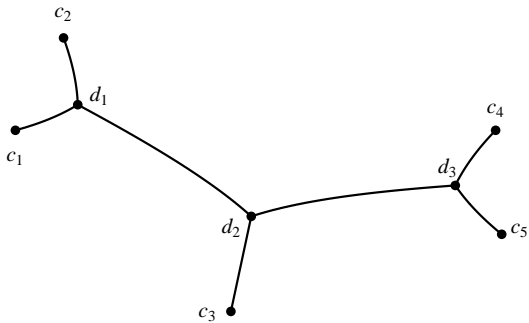}
\caption{\label{Fig_ChebotarevSet-0} Illustration of a P\'olya-Chebotarev continuum ($\nu=5$)}
\end{center}
\end{figure}


The paper is organized as follows. In Section\,2, after two preliminary lemmata, the main result is stated in Theorem\,2. We prove that each inverse image $\T_n^{-1}([-1,1])$ of a polynomial $\T_n$, which is connected, solves a certain P\'olya-Chebotarev problem. Further, we give some simple examples of polynomials with a connected inverse image. In Section\,3, we show how to construct polynomials $\T_n$ with a connected inverse image by solving a certain nonlinear system of equations for the zeros of $\T_n^2-1$. Some examples and a conjecture concerning a denseness statement conclude the paper.

\section{The Connection between the P\'olya-Chebotarev Problem and Inverse Polynomial Images}


Let us start with a lemma which is very important for what follows. The first part, see \cite[Lemma\,2]{Sch-2012a}, is just an immediate consequence of the fundamental theorem of algebra, the second part is due to Peherstorfer\,\cite[Corollary\,2]{Peh-1996}. As usual, let $\PP_n$ denote the set of all polynomials of degree $n$ with complex coefficients.


\begin{lemma}\label{L-THU}
For any polynomial $\T_n(z)=\tau{z}^n+\ldots\in\PP_n$, $\tau\in\C\setminus\{0\}$, there exists a unique $\ell\in\{1,2,\ldots,n\}$, a unique monic polynomial
\begin{equation}\label{H}
\HH_{2\ell}(z)=\prod_{j=1}^{2\ell}(z-a_j)=z^{2\ell}+\ldots\in\PP_{2\ell}
\end{equation}
with pairwise distinct zeros $a_1,a_2,\ldots,a_{2\ell}$, and a unique polynomial\\
$\U_{n-\ell}(z)=\tau{z}^{n-\ell}+\ldots\in\PP_{n-\ell}$ such that the polynomial equation
\begin{equation}\label{THU}
\T_n^2(z)-1=\HH_{2\ell}(z)\,\U_{n-\ell}^2(z)
\end{equation}
holds.\\
\indent Further, there exists a monic polynomial $\RR_{\ell-1}(z)=z^{\ell-1}+\ldots\in\PP_{\ell-1}$ such that
\begin{equation}\label{R}
\T_n'(z)=n\,\RR_{\ell-1}(z)\,\U_{n-\ell}(z)
\end{equation}
and, for $z\in\C$ with $\T_n(z)\notin[-1,1]$,
\begin{equation}
\T_n(z)=\pm\cosh\Bigl(n\int_{a_j}^{z}\frac{\RR_{\ell-1}(w)}{\sqrt{\HH_{2\ell}(w)}}\,\dd{w}\Bigr),
\end{equation}
where $a_j$ is any zero of $\HH_{2\ell}$.
\end{lemma}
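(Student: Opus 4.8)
The plan is to prove the three assertions of Lemma~\ref{L-THU} in the order they are stated, starting from the factorization of $\T_n^2-1$ into irreducible factors over $\C$. First I would write $\T_n^2(z)-1 = \tau^2 \prod_{k}(z-z_k)^{m_k}$ and split the zeros according to the parity of their multiplicities: let $a_1,\ldots,a_{2\ell}$ be the zeros of odd multiplicity. A key observation is that the number of odd-multiplicity zeros must be even, since the polynomial $\T_n^2-1$ has even degree $2n$ and is the product of $(\T_n-1)$ and $(\T_n+1)$, which share no common zero; more carefully, one shows $1 \le \ell \le n$ by counting degrees. Collecting one copy of each odd-multiplicity factor gives the monic squarefree polynomial $\HH_{2\ell}$, and the remaining factors form a perfect square, yielding $\U_{n-\ell}$ with leading coefficient $\tau$ (fixing the sign once, say, so that $\U_{n-\ell}$ has leading coefficient $+\tau$; uniqueness of the whole decomposition then follows from unique factorization in $\C[z]$). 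This establishes \eqref{H}--\eqref{THU}.

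For the second part I would differentiate \eqref{THU}: $2\T_n\T_n' = \HH_{2\ell}'\U_{n-\ell}^2 + 2\HH_{2\ell}\U_{n-\ell}\U_{n-\ell}'$, so $\U_{n-\ell}$ divides $2\T_n\T_n'$, and since $\gcd(\T_n,\U_{n-\ell})=1$ (any common zero would be a zero of $\T_n^2-1$ and of $\T_n$, impossible), $\U_{n-\ell}$ divides $\T_n'$. Writing $\T_n' = n\,\RR_{\ell-1}\U_{n-\ell}$ for a polynomial $\RR_{\ell-1}$ of degree $(n-1)-(n-\ell)=\ell-1$, and comparing leading coefficients ($\T_n'$ has leading coefficient $n\tau$, $\U_{n-\ell}$ has leading coefficient $\tau$) shows $\RR_{\ell-1}$ is monic. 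This gives \eqref{R}. (Here the subtle point one should verify is that the zeros of $\U_{n-\ell}^2$ cancel correctly in $\T_n'$: at a zero of $\T_n^2-1$ of even multiplicity $2r$, $\T_n\mp1$ has a zero of multiplicity $2r$, so $\T_n'$ has a zero of multiplicity $2r-1$, matching $\U_{n-\ell}^2$'s contribution of $2r-2$ plus possibly one more from $\RR_{\ell-1}$ — a consistency check rather than an obstacle.)

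For the final integral representation, fix a zero $a_j$ of $\HH_{2\ell}$ and consider the function $F(z) = \int_{a_j}^{z} \frac{\RR_{\ell-1}(w)}{\sqrt{\HH_{2\ell}(w)}}\,\dd w$ on the simply connected domain where $\T_n(z)\notin[-1,1]$; the integrand is the logarithmic derivative of $\T_n \pm \sqrt{\T_n^2-1}$ up to the factor $n$, because by \eqref{THU} and \eqref{R},
\begin{equation}
\frac{\T_n'(z)}{\sqrt{\T_n^2(z)-1}} = \frac{n\,\RR_{\ell-1}(z)\,\U_{n-\ell}(z)}{\U_{n-\ell}(z)\sqrt{\HH_{2\ell}(z)}} = \frac{n\,\RR_{\ell-1}(z)}{\sqrt{\HH_{2\ell}(z)}},
\end{equation}
after choosing the branch of the square root consistently. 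Hence $\frac{\dd}{\dd z}\operatorname{arccosh}(\T_n(z)) = n\,\RR_{\ell-1}(z)/\sqrt{\HH_{2\ell}(z)}$, so $\operatorname{arccosh}(\T_n(z)) = n F(z) + C$; evaluating at $z=a_j$, where $\T_n(a_j)^2=1$ so $\operatorname{arccosh}(\T_n(a_j)) \in \{0,\ii\pi\}$, pins the constant and yields $\T_n(z) = \pm\cosh(nF(z))$. The main obstacle — or at least the point demanding the most care — is the bookkeeping of branches and single-valuedness: one must check that although $\sqrt{\HH_{2\ell}(w)}$ is multivalued, the combination $\RR_{\ell-1}(w)/\sqrt{\HH_{2\ell}(w)}\,\dd w$ integrated along paths in $\{\,\T_n\notin[-1,1]\,\}$ produces a $\cosh$ that is single-valued (the ambiguity $\pm$ and the period $2\pi\ii$ of $\cosh$ absorb exactly the monodromy of the square root around the $a_j$), which is precisely where the fact that $a_1,\ldots,a_{2\ell}$ are the odd-multiplicity zeros of $\T_n^2-1$ is used.
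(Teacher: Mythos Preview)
Your argument is correct and is exactly the natural route; note, however, that the paper does not itself prove Lemma~\ref{L-THU} but quotes it from the literature (the factorisation \eqref{THU} from \cite[Lemma~2]{Sch-2012a} as ``an immediate consequence of the fundamental theorem of algebra'', and \eqref{R} together with the $\cosh$-representation from Peherstorfer~\cite[Corollary~2]{Peh-1996}), so there is no in-paper proof to compare against --- your sketch is essentially what those references contain. Two minor points: in your parenthetical consistency check you should write $\U_{n-\ell}$ rather than $\U_{n-\ell}^2$ (since $\T_n'=n\RR_{\ell-1}\U_{n-\ell}$, an even-multiplicity zero of order $2r$ of $\T_n^2-1$ contributes $r$ to $\U_{n-\ell}$ and hence $r-1$ to $\RR_{\ell-1}$); and the bound $\ell\ge1$ deserves one explicit line --- e.g.\ if every zero of $\T_n^2-1$ had even multiplicity then $\T_n-1=\tau P^2$ and $\T_n+1=\tau Q^2$ for some $P,Q$ of degree $n/2$, whence $\tau(Q-P)(Q+P)=2$ forces $n=0$.
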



Note that the points $a_1,a_2,\ldots,a_{2\ell}$ are exactly those zeros of $\T_n^2(z)-1$ which have odd multiplicity.


Next, let us introduce the notion of the inverse image of a polynomial. For a polynomial $\T_n\in\PP_n$, let $\Tn$ denote the inverse image of $[-1,1]$ with respect to $\T_n$, i.e.
\begin{equation}
\Tn:=\bigl\{z\in\C:\T_n(z)\in[-1,1]\bigr\}.
\end{equation}
In general, the inverse image $\Tn$ consists of $n$ Jordan arcs \cite{Sch-2012a}. One reason why we are interested in the factorization of $\T_n^2(z)-1$ in the form \eqref{THU} is that the number $\ell$ signifies the \emph{minimum number of Jordan arcs} the inverse image $\Tn$ consists of, see \cite[Theorem\,2]{Sch-2012a}.


Concerning the connectivity of the inverse image $\Tn$, the following is known.


\begin{lemma}\label{L-Connectedness}
Let $\T_n\in\PP_n$, and let $\RR_{\ell-1}$ as in Lemma\,\ref{L-THU}. The following statements are equivalent:
\begin{enumerate}
\item The inverse image $\T_n^{-1}([-1,1])$ is connected.
\item All zeros of the derivative $\T_n'$ are located in $\T_n^{-1}([-1,1])$.
\item All zeros of $\RR_{\ell-1}$ are located in $\T_n^{-1}([-1,1])$.
\end{enumerate}
\end{lemma}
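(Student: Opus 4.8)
The plan is to prove Lemma \ref{L-Connectedness} by establishing the chain of implications $(i)\Rightarrow(ii)\Rightarrow(iii)\Rightarrow(i)$, using the factorizations \eqref{THU} and \eqref{R} from Lemma \ref{L-THU} as the main tool. First, for the equivalence $(ii)\Leftrightarrow(iii)$, note that by \eqref{R} we have $\T_n'=n\,\RR_{\ell-1}\,\U_{n-\ell}$, so the zeros of $\T_n'$ are the zeros of $\RR_{\ell-1}$ together with the zeros of $\U_{n-\ell}$. Now every zero $z_0$ of $\U_{n-\ell}$ satisfies $\T_n^2(z_0)-1=\HH_{2\ell}(z_0)\,\U_{n-\ell}^2(z_0)=0$ by \eqref{THU}, hence $\T_n(z_0)=\pm1\in[-1,1]$, so $z_0\in\T_n^{-1}([-1,1])$ automatically. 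Therefore $(ii)$ holds if and only if the \emph{remaining} zeros of $\T_n'$, namely those of $\RR_{\ell-1}$, lie in $\T_n^{-1}([-1,1])$, which is exactly $(iii)$.

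Next, $(i)\Rightarrow(ii)$: suppose $\T_n^{-1}([-1,1])$ is connected but some zero $z_0$ of $\T_n'$ lies outside it, so $\T_n(z_0)\notin[-1,1]$. The set $\T_n^{-1}([-1,1])$ is a compact set whose complement $\Omega$ in $\ov\C$ is connected (since a connected inverse image of an interval, being a union of $n$ Jordan arcs, has connected complement), and $\T_n$ maps $\Omega$ onto $\ov\C\setminus[-1,1]$. The idea is that $z_0$ being a critical point of $\T_n$ with critical value outside $[-1,1]$ forces the number of preimages of nearby values, counted appropriately, to jump, which contradicts $\T_n$ being a proper degree-$n$ map of the connected domain $\Omega$ onto $\ov\C\setminus[-1,1]$; equivalently, one invokes the Riemann–Hurwitz count for the covering $\T_n:\Omega\to\ov\C\setminus[-1,1]$ (or, more elementarily, uses that the Green function $g_\Omega(z,\infty)=\frac1n\log|\T_n(z)+\sqrt{\T_n^2(z)-1}|$ has a critical point at $z_0$, which is impossible in a connected domain where the Green function has no critical points in the complement of its polar set by the argument principle applied to $g_\Omega$ along level curves). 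Either way, all $n-1$ zeros of $\T_n'$ must lie in $\T_n^{-1}([-1,1])$.

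For $(iii)\Rightarrow(i)$ — which I expect to be the main obstacle — the plan is contrapositive: assume $\T_n^{-1}([-1,1])$ is disconnected and produce a zero of $\RR_{\ell-1}$ outside it. If the inverse image splits into two (or more) disjoint compact pieces $K_1,K_2$, then since $\T_n$ restricted to each piece covers $[-1,1]$ some number of times, say $n_1,n_2$ with $n_1+n_2=n$, one can use the representation $\T_n(z)=\pm\cosh\bigl(n\int_{a_j}^z \RR_{\ell-1}(w)/\sqrt{\HH_{2\ell}(w)}\,\dd w\bigr)$: travel along a path in the complement from $K_1$ to $K_2$, and the change in the "phase" $n\int \RR_{\ell-1}/\sqrt{\HH_{2\ell}}$ must pass through a configuration forcing the integrand's numerator $\RR_{\ell-1}$ to vanish somewhere off $\T_n^{-1}([-1,1])$. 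Concretely, the bounded complementary component of $\T_n^{-1}([-1,1])$ separating $K_1$ from $K_2$ is a region on which $\re\{n\int \RR_{\ell-1}/\sqrt{\HH_{2\ell}}\}$ — essentially the Green function — is positive with boundary values zero, so it has a local maximum inside, and at that maximum the derivative of the Green function vanishes, i.e. $\RR_{\ell-1}(w)/\U_{n-\ell}(w)\cdot(\text{stuff})=0$; since $\U_{n-\ell}$ has no zeros there (its zeros are in the inverse image, as shown above), we get $\RR_{\ell-1}(w)=0$ with $w\notin\T_n^{-1}([-1,1])$. The delicate points will be ensuring such a bounded separating component genuinely exists when the inverse image is disconnected, and carefully justifying the "interior critical point of the Green function" step — this is presumably where the cited result \cite[Theorem 2]{Sch-2012a} on the structure of $\Tn$ as $n$ Jordan arcs, together with standard potential theory (the Green function of a disconnected-looking but actually multiply-connected complement), is brought to bear. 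I would look to \cite{Peh-1996} and \cite{Sch-2012a} for the precise topological bookkeeping rather than reconstruct it from scratch.
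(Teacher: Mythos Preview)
Your argument for $(ii)\Leftrightarrow(iii)$ is correct and is exactly what the paper does: since $\T_n'=n\,\RR_{\ell-1}\,\U_{n-\ell}$ by \eqref{R} and every zero of $\U_{n-\ell}$ lies in $\Tn$ by \eqref{THU}, conditions $(ii)$ and $(iii)$ are equivalent. The paper, however, does \emph{not} attempt a self-contained proof of $(i)\Leftrightarrow(ii)$; it simply cites \cite[Theorem~4]{Sch-2012a}. So the bulk of your proposal is an attempt to supply something the paper deliberately outsources.

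Your sketch for $(i)\Rightarrow(ii)$ is in the right spirit (if $\Tn$ is connected, the complement is simply connected and the Green function has no critical points, so $\T_n'$ cannot vanish there). But your argument for $(iii)\Rightarrow(i)$ contains a concrete error. You invoke ``the bounded complementary component of $\Tn$ separating $K_1$ from $K_2$'' and claim the Green function attains a local maximum there. Neither of these can happen: by Lemma~\ref{L-InverseImage}(ii) the complement $\C\setminus\Tn$ is \emph{always} connected, so no bounded separating component exists; and a nonconstant harmonic function never has an interior local maximum, so the mechanism you describe is impossible in any case. The correct picture when $\Tn$ is disconnected is that the (unique, unbounded) complement is \emph{multiply connected}, and the Green function $g_S(z)=\tfrac{1}{n}\log\bigl|\T_n(z)+\sqrt{\T_n^2(z)-1}\bigr|$ with pole at $\infty$ then has \emph{saddle} points in that complement (one fewer than the number of components of $\Tn$, by the classical critical-point count for Green functions of finitely connected domains). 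At such a saddle point $\nabla g_S$ vanishes, which forces $\T_n'$ to vanish there, producing a zero of $\T_n'$---and hence of $\RR_{\ell-1}$---outside $\Tn$. You gesture at ``multiply-connected complement'' in your last sentence, and that is indeed the right keyword; but the specific argument you wrote down (bounded separating region, local maximum) does not survive.
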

\begin{proof}
The equivalence of (i) and (ii) is proved in \cite[Theorem\,4]{Sch-2012a}. The equivalence of (ii) and (iii) follows immediately from \eqref{R}, since, by \eqref{THU}, all zeros of $\U_{n-\ell}$ are located in $\Tn$.
\end{proof}


Next, we state and prove the first main result. It says that each polynomial $\T_n\in\PP_n$ with a connected inverse image $\Tn$ solves a certain P\'olya-Chebotarev problem.


\begin{theorem}\label{T-InvPolImage}
Let $\T_n(z)=\tau{z}^n+\ldots\in\PP_n$ with $\tau\in\C\setminus\{0\}$ and suppose that all zeros of the derivative $\T_n'$ are located in the inverse image $\Tn$, i.e.\ $\Tn$ is connected. Suppose that $c_1,c_2,\ldots,c_{\nu}$ (pairwise distinct) are exactly the simple zeros of $\T_n^2(z)-1$. Then $S=\Tn$ is the P\'olya-Chebotarev continuum for $\{c_1,c_2,\ldots,c_{\nu}\}$ with corresponding minimal logarithmic capacity
\begin{equation}\label{cap}
\CAP{S}=\frac{1}{\sqrt[n]{2|\tau|}}.
\end{equation}
\end{theorem}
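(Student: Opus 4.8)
The plan is to identify $S=\Tn$ with the set described in Theorem~\ref{T-Chebotarev} by explicitly producing the bifurcation points and showing that the relevant real parts vanish. By Lemma~\ref{L-THU}, write $\T_n^2-1=\HH_{2\ell}\,\U_{n-\ell}^2$ and $\T_n'=n\,\RR_{\ell-1}\,\U_{n-\ell}$. Since $c_1,\dots,c_\nu$ are precisely the simple zeros of $\T_n^2-1$, they are exactly the zeros of $\HH_{2\ell}$, so $\nu=2\ell$ and $\HH_{2\ell}(z)=\prod_{j=1}^{\nu}(z-c_j)$. The natural candidates for the bifurcation points are the $\nu-2=2\ell-2$ zeros of $\RR_{\ell-1}^2$, i.e.\ each zero of $\RR_{\ell-1}$ taken with doubled multiplicity; call this list $d_1,\dots,d_{\nu-2}$, so $\prod_{j=1}^{\nu-2}(w-d_j)=\RR_{\ell-1}^2(w)$. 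Then the hyperelliptic integrand in \eqref{Phi} becomes $\sqrt{\RR_{\ell-1}^2(w)}/\sqrt{\HH_{2\ell}(w)}=\RR_{\ell-1}(w)/\sqrt{\HH_{2\ell}(w)}$ up to sign, which is exactly the integrand appearing in the last formula of Lemma~\ref{L-THU}. Hence $\Phi(z)=\tfrac1n\operatorname{arccosh}\bigl(\pm\T_n(z)\bigr)$ (suitably interpreted), up to an additive constant fixed by the lower limit.

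The next step is to verify the two families of conditions $\re\{\Phi(c_j)\}=0$ and $\re\{\Phi(d_j)\}=0$. For the $c_j$: since $\T_n(c_j)^2=1$, we have $\T_n(c_j)=\pm1$, so $\operatorname{arccosh}(\pm\T_n(c_j))\in\{0,\pm\ii\pi\}$, which is purely imaginary, giving $\re\{\Phi(c_j)\}=0$. For the $d_j$: each $d_j$ is a zero of $\RR_{\ell-1}$, hence a zero of $\T_n'$ of the relevant multiplicity; by hypothesis (connectivity, via Lemma~\ref{L-Connectedness}) every zero of $\T_n'$ lies in $\Tn$, so $\T_n(d_j)\in[-1,1]$, which forces $\operatorname{arccosh}(\pm\T_n(d_j))$ to be purely imaginary and thus $\re\{\Phi(d_j)\}=0$. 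One must also check that $\re\{\Phi(z)\}=0$ describes exactly the set $\{z:\T_n(z)\in[-1,1]\}$: writing $\T_n(z)=\cosh(\zeta)$ with $\zeta=n\Phi(z)+\text{const}$, the condition $\re\zeta=0$ is equivalent to $\cosh\zeta\in[-1,1]$, giving $S=\Tn$ exactly. I should be slightly careful that the constant of integration (coming from the base point $c_1$ in \eqref{Phi} versus a zero $a_j$ of $\HH_{2\ell}$ in Lemma~\ref{L-THU}) is purely imaginary, but this follows since $c_1$ is itself a zero of $\HH_{2\ell}$, so the two base points differ by a period/jump that is purely imaginary.

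Finally, the capacity formula \eqref{cap} comes from comparing leading-order behaviour at $\infty$. Near $\infty$, $\re\{\Phi(z)\}$ is the Green function $g_{\C\setminus S}(z,\infty)$ by the cited Remark, and $g_{\C\setminus S}(z,\infty)=\log|z|-\log\CAP{S}+o(1)$. On the other hand, from $\T_n(z)=\pm\cosh(n\Phi(z))$ and $\T_n(z)=\tau z^n+\dots$, taking logarithms of the dominant term $\tfrac12 e^{n|\Phi(z)|}\sim|\tau||z|^n$ yields $n\,\re\{\Phi(z)\}=\log(2|\tau|)+n\log|z|+o(1)$, hence $\re\{\Phi(z)\}=\log|z|+\tfrac1n\log(2|\tau|)+o(1)$. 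Matching constants gives $-\log\CAP{S}=\tfrac1n\log(2|\tau|)$, i.e.\ $\CAP{S}=(2|\tau|)^{-1/n}$.

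The main obstacle I anticipate is the bookkeeping around multivaluedness: making precise the branch of $\operatorname{arccosh}$, checking that $\RR_{\ell-1}(w)/\sqrt{\HH_{2\ell}(w)}$ is the correct single-valued choice of the hyperelliptic integrand on $\ov\C\setminus S$, and confirming that the additive constant tying $\Phi$ (based at $c_1$) to the Lemma~\ref{L-THU} representation (based at an arbitrary zero $a_j$) is purely imaginary so that it does not disturb $\re\{\Phi\}=0$. Once that is handled cleanly, the verification of the vanishing real parts and the capacity computation are routine; uniqueness of $S$ is supplied by \cite{Groetzsch-1930} together with Theorem~\ref{T-Chebotarev}.
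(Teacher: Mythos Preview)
Your argument contains a genuine gap at the very first identification: the claim that the simple zeros $c_1,\dots,c_\nu$ of $\T_n^2-1$ are \emph{exactly} the zeros of $\HH_{2\ell}$, forcing $\nu=2\ell$ and $\HH_{2\ell}(z)=\prod_{j=1}^{\nu}(z-c_j)$, is false in general. By the remark following Lemma~\ref{L-THU}, the zeros $a_1,\dots,a_{2\ell}$ of $\HH_{2\ell}$ are the zeros of $\T_n^2-1$ of \emph{odd} multiplicity, not only the simple ones; a zero of multiplicity $3,5,\dots$ belongs to $\HH_{2\ell}$ but is not among the $c_j$. For a concrete instance take $\T_3(z)=1+z^3$: here $\T_3^2(z)-1=z^3(z^3+2)$ has $z=0$ as a triple zero and three simple zeros, so $\nu=3$ while $2\ell=4$, and $\T_3^{-1}([-1,1])$ is indeed connected since $\T_3'(0)=0$ with $\T_3(0)=1$. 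The paper handles exactly this situation by splitting the zeros of $\HH_{2\ell}$ into the simple ones $c_1,\dots,c_\nu$ and the remaining $b_1,\dots,b_{2\ell-\nu}$ (those that are also zeros of $\U_{n-\ell}$), showing that each $b_j$ is necessarily a zero of $\RR_{\ell-1}$, and then \emph{defining} $\prod_{j=1}^{\nu-2}(z-d_j):=\RR_{\ell-1}^2(z)\big/\prod_{j=1}^{2\ell-\nu}(z-b_j)$ so that the factors $(z-b_j)$ cancel between numerator and denominator of the integrand. Your choice $\prod_{j}(w-d_j)=\RR_{\ell-1}^2(w)$ leaves uncancelled factors $(w-b_j)$ under the square root in the denominator, so the integrand no longer matches the form required by Theorem~\ref{T-Chebotarev}.

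Once this bookkeeping is repaired, the rest of your outline coincides with the paper's proof, with one genuine difference worth noting: for the capacity formula \eqref{cap} you extract the Robin constant from the large-$z$ asymptotics of the Green function $\re\{\Phi(z)\}$, whereas the paper instead quotes that $\T_n/\tau$ is the monic Chebyshev polynomial on $\Tn$ with minimum deviation $L_n(S)=1/|\tau|$ and combines this with the identity $L_n(S)=2(\CAP S)^n$. Both routes are valid and of comparable length; yours is somewhat more self-contained, the paper's avoids any branch-tracking at infinity.
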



\begin{proof}
For $\T_n\in\PP_n$, let $\HH_{2\ell}(z)$, $\U_{n-\ell}(z)$, and $\RR_{\ell-1}(z)$ as in Lemma\,\ref{L-THU}. By assumption and Lemma\,\ref{L-THU}, $c_1,\ldots,c_{\nu}\in\{a_1,\ldots,a_{2\ell}\}$ are those zeros of $\HH_{2\ell}(z)$, which are not zeros of $\U_{n-\ell}(z)$. Let $b_1,\ldots,b_{2\ell-\nu}\in\{a_1,\ldots,a_{2\ell}\}$ be those zeros of $\HH_{2\ell}(z)$, which are also zeros of $\U_{n-\ell}(z)$. Then, by \eqref{THU}, each $b_j$ is a zero of $\T_n^2(z)-1$ with an odd multiplicity greater than two. Thus, by \eqref{THU} and \eqref{R}, each $b_j$ is a zero of $\RR_{\ell-1}(z)$. Hence
\[
\frac{\RR_{\ell-1}(z)}{\sqrt{\HH_{2\ell}(z)}}=\frac{\sqrt{\RR_{\ell-1}^2(z)}}{\sqrt{\prod_{j=1}^{\nu}(z-c_j)\cdot\prod_{j=1}^{2\ell-\nu}(z-b_j)}}
=\frac{\sqrt{\prod_{j=1}^{\nu-2}(z-d_j)}}{\sqrt{\prod_{j=1}^{\nu}(z-c_j)}},
\]
where
\begin{equation}\label{dj}
\prod_{j=1}^{\nu-2}(z-d_j):=\frac{\RR_{\ell-1}^2(z)}{\prod_{j=1}^{2\ell-\nu}(z-b_j)}.
\end{equation}
Define
\[
\Phi(z):=\int_{c_1}^{z}\frac{\RR_{\ell-1}(w)}{\sqrt{\HH_{2\ell}(w)}}\,\dd{w},
\]
then, by Lemma\,\ref{L-THU},
\begin{gather*}
\bigl\{z\in\C:\re\Phi(z)=0\bigr\}=\bigl\{z\in\C:\cosh(n\Phi(z))\in[-1,1]\bigr\}\\
=\bigl\{z\in\C:\T_n(z)\in[-1,1]\bigr\}=\T_n^{-1}([-1,1]).
\end{gather*}
By construction, $c_1,\ldots,c_{\nu}\in\Tn$, thus $\re\Phi(c_j)=0$, $j=1,\ldots,\nu$. By \eqref{dj}, $d_1,\ldots,d_{\nu-2}$ are zeros of $\RR_{\ell-1}$, thus, by Lemma\,\ref{L-Connectedness}, $d_1,\ldots,d_{\nu-2}\in\Tn$ and again $\re\Phi(d_j)=0$, $j=1,\ldots,\nu-2$. Now the assertion apart from formula \eqref{cap} follows by Theorem\,\ref{T-Chebotarev} and it remains to prove \eqref{cap}.\\
\indent For any polynomial $\T_n(z)=\tau{z}^n+\ldots\in\PP_n$, $\tau\in\C\setminus\{0\}$, it is known, see \cite{Peh-1996}, \cite{KamoBorodin} or \cite{OPZ}, that the monic polynomial $\hat{\T}_n(z):=\T_n(z)/\tau=z^n+\ldots\in\PP_n$ is the Chebyshev polynomial on its inverse image $S:=\T_n^{-1}([-1,1])$ with minimum deviation $L_n(S)=1/|\tau|$. Thus, from the identity $L_n(S)=2(\CAP{S})^n$, see \cite[Theorem\,1]{Sch-2008}, we get \eqref{cap}.
\end{proof}


\begin{remark}
\begin{enumerate}
\item Let $\T_n\in\PP_n$ and $S:=\Tn$. Then~\cite{Peh-1996}
\begin{equation}
g_S(z)=\tfrac{1}{n}\log\left|\T_n(z)+\sqrt{\T_n^2(z)-1}\right|
\end{equation}
is the Green function of $\ov{\C}\setminus{S}$ with pole at infinity. For the complex Green function
\begin{equation}
G_S(z)=\exp\left(-g_S(z)+\ii{h}_S(z)\right),
\end{equation}
where $h_S(z)$ is the harmonic conjugate of $g_S(z)$, we have~\cite{Peh-1996}
\begin{equation}
G_S(z)=\exp\left(-\int_{c_1}^{z}\frac{\RR_{\ell-1}(w)}{\sqrt{\HH_{2\ell}(w)}}\,\dd{w}\right),
\end{equation}
where $\RR_{\ell-1}$ and $\HH_{2\ell}$ are as in Lemma\,\ref{L-THU}.
\item In \cite{Sch-2013a}, the author characterizes the P\'olya-Chebotarev continuum with the help of Jacobian elliptic and theta functions (using Zolotarev's conformal mapping) for the case of three and four points (i.e.\ $\nu=3$ and $\nu=4$). With this characterization, one can construct sets $\{c_1,c_2,c_3,c_4\}$ such that the corresponding P\'olya-Chebotarev continuum is \emph{not} an inverse polynomial image.
\end{enumerate}
\end{remark}


Let us give some examples of polynomials with a connected inverse image.


\begin{example}
\begin{enumerate}
\item Let $\T_n(z):=z^n$. The inverse image of $\T_n$ is
\begin{equation}\label{S-1}
S:=\Tn=\bigl\{z=r\e^{\ii\varphi}:r\in[0,1],\varphi\in\{0,\tfrac{\pi}{n},\tfrac{2\pi}{n},\ldots,\tfrac{(2n-1)\pi}{n}\}\bigr\},
\end{equation}
see Fig.\,\ref{Fig_ChebotarevSet-1}, where $n=5$. By Theorem\,\ref{T-InvPolImage}, the set $S$ in \eqref{S-1} is the P\'olya-Chebotarev continuum for the $\nu=2n$ points $\{c_0,c_1,\ldots,c_{2n-1}\}$, where $c_k:=\e^{\ii{k}\pi/n}$, $k=0,1,\ldots,2n-1$, with (the only) bifurcation point $d_1=0$ (of multiplicity $2n-2$). With the notations of Lemma\,\ref{L-THU}, we have $\ell=n$,
\[
\HH_{2\ell}(z)=\prod_{k=0}^{2n-1}(z-\e^{\ii{k}\pi/n})=z^{2n}-1, \quad
\U_{n-\ell}(z)=1 \quad\text{and}\quad \RR_{\ell-1}(z)=z^{n-1}.
\]
Although very simple, this example seems to be new since we could not find it in the literature.
\begin{figure}[ht]
\begin{center}
\includegraphics[scale=0.8]{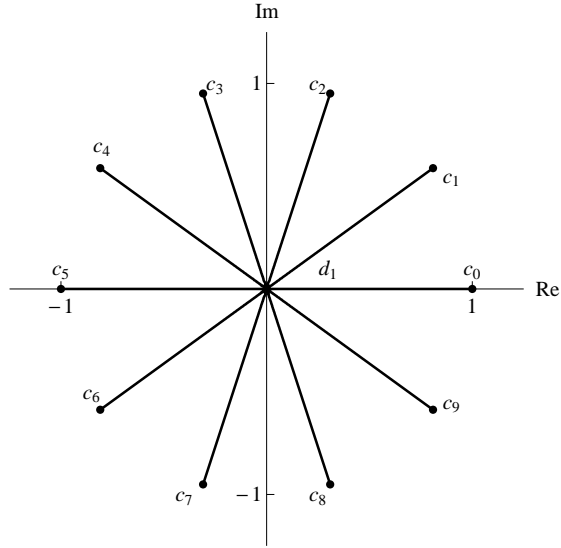}
\caption{\label{Fig_ChebotarevSet-1} The inverse image of $[-1,1]$ with respect to the polynomial $\T_n(z)=z^n$, $n=5$}
\end{center}
\end{figure}
\item Let $n=2$ and $\T_n(z):=\frac{2}{1+\alpha^2}(z^2-1)+1$ with $0\leq\alpha<\infty$. The inverse image of $\T_n$ is the cross
\begin{equation}\label{S-2}
S:=\Tn=[-1,1]\cup[-\ii\alpha,\ii\alpha].
\end{equation}
By Theorem\,\ref{T-InvPolImage}, the set $S$ in \eqref{S-2} is the P\'olya-Chebotarev continuum for the $\nu=4$ points $\{c_1,c_2,c_3,c_4\}=\{-1,1,-\ii\alpha,\ii\alpha\}$ with (the only) bifurcation point $d_1=0$ (of multiplicity two), see Fig.\,\ref{Fig_ChebotarevSet-2}. With the notations of Lemma\,\ref{L-THU}, we have $\ell=2$,
\[
\HH_{2\ell}(z)=(z^2-1)(z^2+\alpha^2), \quad
\U_{n-\ell}(z)=\frac{2}{1+\alpha^2} \quad\text{and}\quad \RR_{\ell-1}(z)=z.
\]
\begin{figure}[ht]
\begin{center}
\includegraphics[scale=0.9]{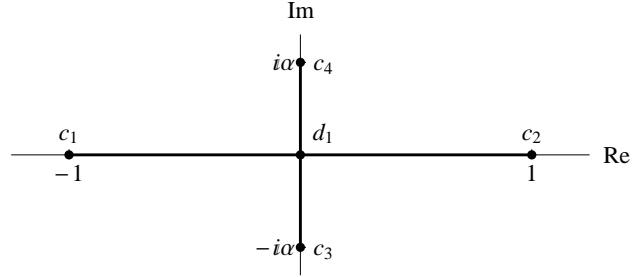}
\caption{\label{Fig_ChebotarevSet-2} P\'olya-Chebotarev continuum for the four points $\{-1,1,-\ii\alpha,\ii\alpha\}$}
\end{center}
\end{figure}
\item Let $n=3$, $0\leq\alpha<\infty$, and define
\begin{equation}\label{T3}
\T_n(z):=-1-\frac{(z-1)(2z+1-\alpha^2)^2}{(1+\alpha^2)^2}.
\end{equation}
The zeros of $\T_n^2(z)-1$ are $\pm1$, $\tfrac{1}{2}(1+\alpha^2)\pm\ii\alpha$ (multiplicity one) and $\tfrac{1}{2}(\alpha^2-1)$ (multiplicity two). The zeros of $\T_n'(z)$ are $\tfrac{1}{2}(\alpha^2-1)$ and $\tfrac{1}{6}(3+\alpha^2)$. For $\alpha\in[0,\sqrt{3}]$, the inverse image $\Tn$ is connected and consists of the interval $[-1,1]$ and a hyperbola moving from $\tfrac{1}{2}(1+\alpha^2)+\ii\alpha$ to $\tfrac{1}{2}(1+\alpha^2)-\ii\alpha$ crossing the interval $[-1,1]$ at $\tfrac{1}{6}(3+\alpha^2)$. By Theorem\,\ref{T-InvPolImage}, the set $S=\Tn$ is the P\'olya-Chebotarev continuum for the $\nu=4$ points
\[
\{c_1,c_2,c_3,c_4\}=\{-1,1,\tfrac{1}{2}(1+\alpha^2)-\ii\alpha,\tfrac{1}{2}(1+\alpha^2)+\ii\alpha\}
\]
with the bifurcation point $d_1=\tfrac{1}{6}(3+\alpha^2)$ (of multiplicity two). In Fig.\,\ref{Fig_ChebotarevSet-3}, we have plotted the inverse image $\Tn$ of the polynomial $\T_n(z)$ defined in \eqref{T3} for $\alpha=\tfrac{1}{2}$. For $\alpha=0$, we get $\T_n(z)=3z-4z^3$, i.e.\ the Chebyshev polynomial of the first kind of degree three, for $\alpha=\sqrt{3}$, we get $\T_n(z)=-1-\tfrac{1}{4}(z-1)^3$.
\begin{figure}[ht]
\begin{center}
\includegraphics[scale=0.8]{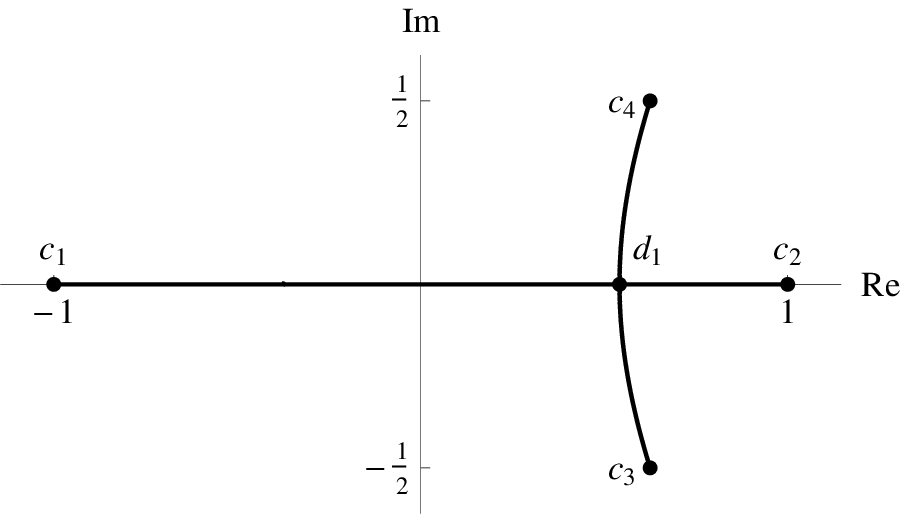}
\caption{\label{Fig_ChebotarevSet-3} The inverse image $\Tn$ of the polynomial $\T_n(z)$ defined in \eqref{T3} for $\alpha=\tfrac{1}{2}$}
\end{center}
\end{figure}
\item Let $n=4$, $0\leq\alpha<\infty$, and define
\begin{equation}\label{T4}
\T_n(z):=\frac{8z^4-8z^2+1+4\alpha^2}{1+4\alpha^2}.
\end{equation}
The zeros of $\T_n^2(z)-1$ are $\pm1$, $\pm\frac{1}{2}\beta\pm\ii\alpha/\beta$ (multiplicity one) and $0$ (multiplicity two), where $\beta:=\sqrt{1+\sqrt{1+4\alpha^2}}$. The zeros of $\T_n'(z)$ are $0$ and $\pm1/\sqrt{2}$. The inverse image $\Tn$ is connected and consists of the interval $[-1,1]$ and two analytic Jordan arcs moving from $\frac{1}{2}\beta+\ii\alpha/\beta$ and $-\frac{1}{2}\beta+\ii\alpha/\beta$ to $\frac{1}{2}\beta-\ii\alpha/\beta$ and $-\frac{1}{2}\beta-\ii\alpha/\beta$ crossing the interval $[-1,1]$ at $1/\sqrt{2}$ and $-1/\sqrt{2}$, respectively. By Theorem\,\ref{T-InvPolImage}, the set $S=\Tn$ is the P\'olya-Chebotarev continuum for the $\nu=6$ points
\[
\{c_1,c_2,c_3,c_4,c_5,c_6\}=\{\pm1,\pm\tfrac{1}{2}\beta\pm\ii\alpha/\beta\}
\]
with the bifurcation points $d_{1,2}=\pm1/\sqrt{2}$. In Fig.\,\ref{Fig_ChebotarevSet-4}, we have plotted the inverse image $\Tn$ of the polynomial $\T_n(z)$ defined in \eqref{T4} for $\alpha=2$. For $\alpha=0$, we get $\T_n(z)=8z^4-8z^2+1$, i.e.\ the Chebyshev polynomial of the first kind of degree four.
\begin{figure}[ht]
\begin{center}
\includegraphics[scale=0.8]{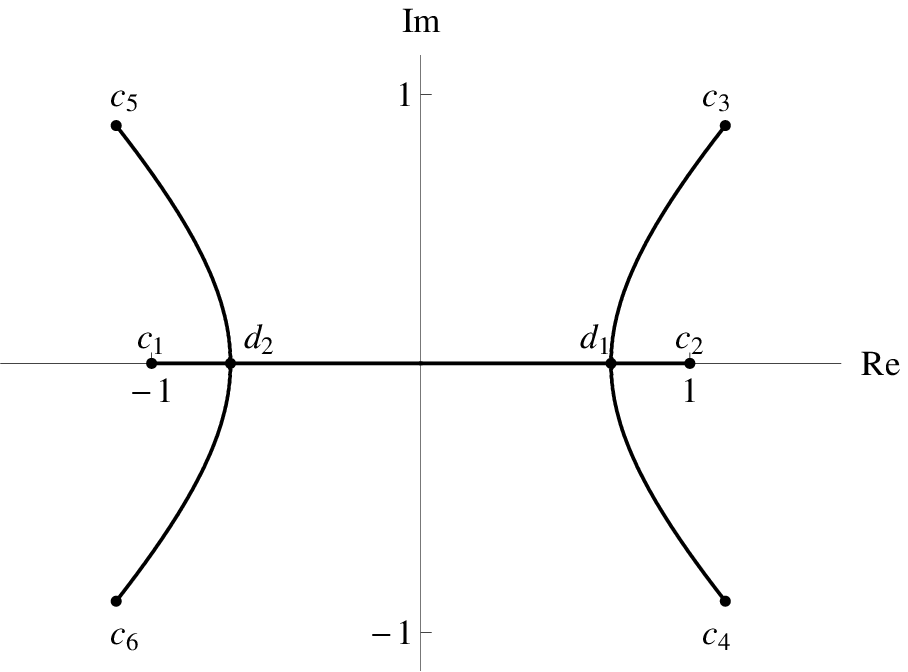}
\caption{\label{Fig_ChebotarevSet-4} The inverse image $\Tn$ of the polynomial $\T_n(z)$ defined in \eqref{T4} for $\alpha=2$}
\end{center}
\end{figure}
\end{enumerate}
\end{example}

\section{Construction of Polynomials with Connected Inverse Image}


Let us begin with a statement on a polynomial system of equations, which is valid for the zeros of $\T_n^2-1$, where $\T_n\in\PP_n$. For the proof, see \cite[Lemma\,2.1]{PehSch-1999}.


\begin{lemma}[\cite{PehSch-1999}]\label{L-PolEq}
\begin{enumerate}
\item Let $\T_n(z)=\tau{z}^n+\ldots\in\PP_n$ and let $z_1^+,\ldots,z_n^+$ and $z_1^-,\ldots,z_n^-$ be the zeros of $\T_n(z)-1$ and $\T_n(z)+1$, respectively. Then the following polynomial system of equations hold:
\begin{equation}\label{PolEq}
\sum_{j=1}^{n}(z_j^{+})^k-\sum_{j=1}^{n}(z_j^{-})^k=0, \qquad k=1,2,\ldots,n-1.
\end{equation}
\item Suppose that $z_1^{+},\ldots,z_n^{+},z_1^{-},\ldots,z_n^{-}\in\C$ with $z_i^{+}\neq{z}_j^-$, $i,j\in\{1,\ldots,n\}$, satisfy the polynomial system \eqref{PolEq}. Then there exists a polynomial $\T_n(z)=\tau{z}^n+\ldots\in\PP_n$ such that
\begin{equation}\label{Tn-1}
\T_n(z)-1=\tau\prod_{j=1}^{n}(z-z_j^{+}) \quad\text{and}\quad \T_n(z)+1=\tau\prod_{j=1}^{n}(z-z_j^{-})
\end{equation}
holds.
\end{enumerate}
In both cases, the leading coefficient $\tau$ of $\T_n$ is given by
\begin{equation}\label{tau-1}
\tau=-2\prod_{j=1}^{n}(z_1^{-}-z_j^{+})^{-1}.
\end{equation}
\end{lemma}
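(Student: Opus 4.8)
The plan is to prove both parts of Lemma~\ref{L-PolEq} by exploiting the generating-function identity behind the Newton–Girard formulas, namely that for a monic-up-to-scaling polynomial $P(z)=\tau\prod_{j=1}^n(z-z_j)$ one has
\[
\frac{P'(z)}{P(z)}=\sum_{j=1}^n\frac{1}{z-z_j}=\sum_{k=0}^{\infty}\frac{p_k}{z^{k+1}},\qquad p_k:=\sum_{j=1}^n z_j^k,
\]
valid as a formal (or convergent, for $|z|$ large) Laurent expansion. First I would apply this to $P=\T_n-1$ and to $Q=\T_n+1$. Since $P$ and $Q$ have the \emph{same} leading coefficient $\tau$ and agree in their top $n$ coefficients except for the constant term (indeed $Q-P\equiv2$), the difference $P'(z)/P(z)-Q'(z)/Q(z)=(PQ'-QP')/(PQ)=-2\T_n'(z)/(\T_n^2(z)-1)$ has numerator $-2\T_n'$ of degree $n-1$ and denominator $\T_n^2-1$ of degree $2n$, hence decays like $z^{-(n+1)}$ at infinity. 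Comparing with $\sum_{k\ge0}\bigl(\sum_j (z_j^+)^k-\sum_j (z_j^-)^k\bigr)z^{-(k+1)}$ forces the coefficients of $z^{-1},\dots,z^{-n}$ to vanish; the $z^{-1}$ coefficient vanishes trivially ($n-n=0$), and the remaining vanishing conditions are exactly \eqref{PolEq} for $k=1,\dots,n-1$. This gives part~(i).

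For part~(ii) I would run the argument in reverse. Given the prescribed nodes with $z_i^+\neq z_j^-$, define $P(z):=\tau\prod_{j=1}^n(z-z_j^+)$ and $Q(z):=\tau\prod_{j=1}^n(z-z_j^-)$ with $\tau$ chosen as in \eqref{tau-1} — I will justify that choice in the last step. The hypothesis \eqref{PolEq} says precisely that the power sums of the $z_j^+$ and of the $z_j^-$ agree for $k=1,\dots,n-1$; by Newton's identities this is equivalent to $P$ and $Q$ having the same coefficients of $z^n,z^{n-1},\dots,z^1$, so that $Q(z)-P(z)$ is a \emph{constant}. Setting $\T_n(z):=P(z)+1=\tau z^n+\dots\in\PP_n$, it remains only to check that $\T_n(z)+1=Q(z)$, i.e.\ that this constant equals $2$. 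Evaluating $Q-P$ at the point $z=z_1^-$ (a zero of $Q$) gives $Q(z_1^-)-P(z_1^-)=-P(z_1^-)=-\tau\prod_{j=1}^n(z_1^--z_j^+)$, and with $\tau=-2\prod_{j=1}^n(z_1^--z_j^+)^{-1}$ this is exactly $2$; the condition $z_i^+\neq z_j^-$ guarantees the product in \eqref{tau-1} is nonzero so $\tau$ is well defined and nonzero. Hence $\T_n-1=P$ and $\T_n+1=Q$ as claimed in \eqref{Tn-1}.

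Finally, for the formula \eqref{tau-1} in \emph{both} cases: in case~(ii) it is built in by construction as just shown; in case~(i) it follows by evaluating the already-established identity $\T_n(z)+1=\tau\prod_{j=1}^n(z-z_j^-)$ — wait, rather one evaluates $\T_n(z)-1=\tau\prod_{j=1}^n(z-z_j^+)$ at $z=z_1^-$, giving $\T_n(z_1^-)-1=-2=\tau\prod_{j=1}^n(z_1^--z_j^+)$, which rearranges to \eqref{tau-1}. I expect the only genuinely delicate point to be the bookkeeping in part~(ii): making sure that ``equal power sums up to order $n-1$'' translates, via Newton's identities, into ``equal coefficients down to degree $1$'' (not degree $0$), so that $Q-P$ is constant but not necessarily zero — this is where the single remaining degree of freedom that pins down $\tau$ lives. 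Everything else is a routine comparison of Laurent coefficients and a one-point evaluation.
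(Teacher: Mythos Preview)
Your argument is correct and self-contained. Note, however, that the paper does \emph{not} supply its own proof of this lemma: it is quoted from \cite{PehSch-1999} with the remark ``For the proof, see \cite[Lemma\,2.1]{PehSch-1999}.'' So there is nothing in the present paper to compare against; your write-up would in fact serve as a complete replacement for that external citation.

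Two minor points. First, there is a harmless sign slip: the correct identity is
\[
\frac{P'(z)}{P(z)}-\frac{Q'(z)}{Q(z)}=\frac{P'Q-PQ'}{PQ}=\frac{2\,\T_n'(z)}{\T_n^2(z)-1},
\]
not $-2\T_n'/(\T_n^2-1)$ (you interchanged $P'Q-PQ'$ with $PQ'-QP'$). This does not affect the conclusion, since only the order of decay at infinity is used. Second, your bookkeeping in part~(ii) is exactly right: equality of the power sums $p_1,\dots,p_{n-1}$ forces, via Newton's identities, equality of the elementary symmetric functions $e_1,\dots,e_{n-1}$, hence of all coefficients of $P$ and $Q$ down to degree~$1$, leaving $Q-P$ constant; the evaluation at $z_1^-$ then pins down $\tau$ as in \eqref{tau-1}.
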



Next, let us give some further properties of the inverse image $\Tn$ of a polynomial $\T_n$, see \cite[Lemma\,1]{Sch-2012a}.


\begin{lemma}[\cite{Sch-2012a}]\label{L-InverseImage}
Let $\T_n\in\PP_n$.
\begin{enumerate}
\item The inverse image $\T_n^{-1}([-1,1])$ consists of $n$ analytic Jordan arcs, denoted by\\ $C_1,C_2,\dots,C_n$, where the $2n$ zeros of $\T_n^2-1$ are the endpoints of the $n$ arcs. If $z_0\in\C$ is a zero of $\T_n^2-1$ of multiplicity $\kappa$ then exactly $\kappa$ analytic Jordan arcs $C_{i_1},C_{i_2},\ldots,C_{i_\kappa}$ of $\T_n^{-1}([-1,1])$, $1\leq{i}_1<i_2<\ldots<i_\kappa\leq{n}$, have $z_0$ as common endpoint. These $\kappa$ Jordan arcs are cutting each other at successive angles of $2\pi/\kappa$. If $z_0\in\C$ is a double zero of $\T_n^2-1$ then the two analytic Jordan arcs with the same endpoint $z_0$ can be conjoined into one analytic Jordan arc.
\item The complement $\C\setminus\Tn$ is connected.
\end{enumerate}
\end{lemma}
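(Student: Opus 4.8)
The plan is to handle part (ii) first, since it is self-contained, and then to obtain part (i) from a local normal-form analysis of $S:=\T_n^{-1}([-1,1])$ combined with a global count of arc-endpoints.

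For part (ii) I would argue as follows. The restriction $\T_n\colon\ov\C\setminus S\to\ov\C\setminus[-1,1]$ is a proper nonconstant holomorphic map: for a compact $K\subseteq\ov\C\setminus[-1,1]$, the set $\T_n^{-1}(K)$ is a closed subset of the compact space $\ov\C$ that is disjoint from $S$, hence a compact subset of $\ov\C\setminus S$. Being nonconstant holomorphic it is open, and being proper between locally compact Hausdorff spaces it is closed. Consequently the image of any connected component $W$ of $\ov\C\setminus S$ is a nonempty subset of $\ov\C\setminus[-1,1]$ that is both open and closed there, hence equals $\ov\C\setminus[-1,1]$ and in particular contains $\infty$. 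Since $\infty$ is the only point of $\ov\C$ mapped to $\infty$ by a polynomial, every component of $\ov\C\setminus S$ contains $\infty$, so there is just one component, and deleting $\infty$ shows that $\C\setminus S$ is connected.

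For part (i), I would fix $z_0\in S$, let $m\ge1$ be the order of vanishing of $\T_n(z)-\T_n(z_0)$ at $z_0$ (so $m\ge2$ precisely when $\T_n'(z_0)=0$), and extract an $m$-th root of $\T_n(z)-\T_n(z_0)$ to produce a local biholomorphism $\phi$ with $\phi(z_0)=0$ and $\T_n(z)=\T_n(z_0)+\phi(z)^m$. In the coordinate $\zeta=\phi(z)$ the set $S$ becomes $\{\zeta:\zeta^m\in[-1-\T_n(z_0),\,1-\T_n(z_0)]\}$, and I would split into three cases. If $\T_n(z_0)\in(-1,1)$ this is the preimage under $\zeta\mapsto\zeta^m$ of a real interval containing $0$ in its interior, so near $0$ it consists of $2m$ rays at angles $k\pi/m$; pairing antipodal rays exhibits $S$ locally as $m$ analytic arcs crossing at $z_0$ (this is the only mechanism producing interior crossing points of $S$, and no arc terminates there). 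If $\T_n(z_0)=1$ the interval is $[-2,0]$, giving $m$ rays $\arg\zeta=(2k+1)\pi/m$; if $\T_n(z_0)=-1$ the interval is $[0,2]$, giving $m$ rays $\arg\zeta=2k\pi/m$. In these last two cases $z_0$ is a zero of $\T_n^2-1$ of multiplicity $\kappa=m$ and is the common endpoint of exactly $\kappa$ analytic arcs meeting at successive angles $2\pi/\kappa$; when $\kappa=2$ the two rays are antipodal, so the two arcs conjoin analytically through $z_0$. This yields all the local assertions.

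Finally I would count. By the normal form $S$ is everywhere a finite union of analytic arcs, the only points where it fails to be a one-dimensional analytic manifold or where an arc can terminate are the finitely many zeros of $\T_n^2-1$, and at such a zero of multiplicity $\kappa$ exactly $\kappa$ arc-ends meet. Hence $S$ is a union of maximal analytic arcs which, traversed ``straight through'' the interior critical points, have both endpoints among the zeros of $\T_n^2-1$; that no maximal arc can instead be a closed loop (through a critical point or not) follows from a maximum-principle argument, since such a loop would bound a region whose image under $\T_n$ is a nonempty bounded open set with boundary contained in $[-1,1]$, which is impossible. Summing the multiplicities $\kappa$ over all zeros of $\T_n^2-1$ gives $\deg(\T_n^2-1)=2n$ arc-ends, hence exactly $n$ arcs $C_1,\dots,C_n$; each is a Jordan arc because lifting the standard parametrization of $[-1,1]$ forces distinct parameter values at distinct points of the arc. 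I expect the genuine work to be in this last step: organizing the local arc-pieces into exactly $n$ globally simple arcs and correctly threading them through the critical points of $\T_n$ lying inside $S$. The clean arc-end count, together with part (ii), is what keeps this bookkeeping under control.
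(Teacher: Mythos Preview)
The paper does not prove this lemma; it is quoted from \cite[Lemma\,1]{Sch-2012a}, so there is no in-paper argument to compare against. Your overall strategy is sound: the properness argument for (ii) is correct and efficient, the local normal form $\T_n(z)=\T_n(z_0)+\phi(z)^m$ gives exactly the right picture at every point of $S$, and the arc-end count $\sum\kappa=\deg(\T_n^2-1)=2n$ is the clean way to obtain exactly $n$ arcs once you know none of them closes up.

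There is one genuine slip in the final step. Your justification that each $C_j$ is simple --- ``lifting the standard parametrization of $[-1,1]$ forces distinct parameter values at distinct points of the arc'' --- implicitly assumes that $\T_n|_{C_j}$ is a bijection onto $[-1,1]$. This fails whenever $\T_n$ has a critical point with critical value in the open interval $(-1,1)$, which is precisely the situation you yourself flag as requiring ``threading through''. Already for $\T_2(z)=z^2$ (the case $\alpha=1$ of the paper's own Example\,(ii)) the two arcs are $C_1=[-1,1]$ and $C_2=[-\ii,\ii]$, and $\T_2|_{C_1}$ is two-to-one onto $[0,1]$, not a homeomorphism onto $[-1,1]$. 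The repair is to reuse the tool you already introduced to exclude closed arcs: if some $C_j$ met itself, one could extract from it a simple closed sub-curve lying in $S$, which bounds a bounded region and is ruled out by your maximum-principle argument (equivalently, by part (ii), since $S$ has empty interior). Thus both ``no closed arcs'' and ``each arc is a Jordan arc'' reduce to the single fact that $S$ contains no Jordan curve; once you phrase it that way, the bookkeeping you anticipated collapses.
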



With the help of Lemma\,\ref{L-PolEq}, we will construct polynomials $\T_n\in\PP_n$ with the property
\begin{equation}\label{z0}
\T_n'(z_0)=0 \quad\Rightarrow\quad \T_n(z_0)\in\{-1,1\}.
\end{equation}
Note that if $\T_n$ satisfies \eqref{z0} then, by Lemma\,\ref{L-Connectedness}, $\Tn$ is connected.


\begin{theorem}\label{T-Computation}
Suppose that the $2\nu-2$ pairwise distinct points $c_1,\ldots,c_{\nu}\in\C$ and\\ $d_1,\ldots,d_{\nu-2}\in\C$ satisfy the system
\begin{equation}\label{PolSystem-1}
\sum_{j=1}^{\nu}\alpha_jc_j^k+3\sum_{j=1}^{\nu-2}\gamma_jd_j^k+2\sum_{j=1}^{n-2\nu+3}\beta_jz_j^k=0, \qquad k=1,2,\ldots,n-1,
\end{equation}
where $z_1,\ldots,z_{n-2\nu+3}\in\C$ pairwise distinct, with $\alpha_j,\beta_j,\gamma_j\in\{-1,1\}$ and
\begin{equation}\label{PolSystem-2}
\sum_{j=1}^{\nu}\alpha_j+3\sum_{j=1}^{\nu-2}\gamma_j+2\sum_{j=1}^{n-2\nu+3}\beta_j=0.
\end{equation}
Then $S=\Tn$ is the P\'olya-Chebotarev continuum for $\{c_1,\ldots,c_{\nu}\}$ with bifurcation points $d_1,\ldots,d_{\nu-2}$, where $\T_n$ is given by
\begin{equation}\label{Tn-2}
\T_n(z)=1+\tau\prod_{\genfrac{}{}{0pt}{}{j=1}{\alpha_j=1}}^{\nu}(z-c_j)
\cdot\prod_{\genfrac{}{}{0pt}{}{j=1}{\gamma_j=1}}^{\nu-2}(z-d_j)^3
\cdot\prod_{\genfrac{}{}{0pt}{}{j=1}{\beta_j=1}}^{n-2\nu+3}(z-z_j)^2
\end{equation}
and
\begin{equation}\label{tau-2}
\tau=-2\Bigl(\prod_{\genfrac{}{}{0pt}{}{j=1}{\alpha_j=1}}^{\nu}(z^{-}-c_j)\cdot\prod_{\genfrac{}{}{0pt}{}{j=1}{\gamma_j=1}}^{\nu-2}(z^{-}-d_j)^3
\cdot\prod_{\genfrac{}{}{0pt}{}{j=1}{\beta_j=1}}^{n-2\nu+3}(z^{-}-z_j)^2\Bigr)^{-1},
\end{equation}
where $z^{-}$ is either a point $c_j$ for which $\alpha_j=-1$ or a point $d_j$ for which $\gamma_j=-1$ or a point $z_j$ for which $\beta_j=-1$.\\
\indent Moreover, the P\'olya-Chebotarev continuum $S=\Tn$ consists of $2\nu-3$ analytic Jordan arcs, where
\begin{itemize}
\item each point $c_j$, $j=1,\ldots,\nu$, is an endpoint of one analytic Jordan arc;
\item each point $d_j$, $j=1,\ldots,\nu-2$, is an endpoint of three analytic Jordan arcs with an angle of $2\pi/3$ between two arcs.
\end{itemize}
\end{theorem}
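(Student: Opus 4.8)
The plan is to recognise the hypotheses \eqref{PolSystem-1}--\eqref{PolSystem-2} as a disguised instance of the polynomial system \eqref{PolEq}, to build $\T_n$ via Lemma~\ref{L-PolEq}(ii), and then to feed the resulting polynomial through Theorem~\ref{T-InvPolImage} and Lemma~\ref{L-InverseImage}. Throughout I would use (as is implicit in the statement, and needed for the multiplicity pattern below) that the $n+1$ points $c_1,\dots,c_\nu,d_1,\dots,d_{\nu-2},z_1,\dots,z_{n-2\nu+3}$ are pairwise distinct. Let $P^{+}(z)$ be the product over the indices $j$ with $\alpha_j=1$, $\gamma_j=1$, $\beta_j=1$ appearing in \eqref{Tn-2}, and $P^{-}(z)$ the analogous product over the indices with $\alpha_j=-1$, $\gamma_j=-1$, $\beta_j=-1$; both are monic, and I would list their zeros with multiplicity as $z_1^{+},\dots$ and $z_1^{-},\dots$ respectively.

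First I would show that \eqref{PolSystem-2} is nothing but $\deg P^{+}=\deg P^{-}=n$: since $\deg P^{+}+\deg P^{-}=\nu+3(\nu-2)+2(n-2\nu+3)=2n$ in any case, and expanding each $\sum_j\alpha_j$ as (number of $+1$'s) minus (number of $-1$'s), etc., rewrites \eqref{PolSystem-2} as $2\deg P^{+}-2n=0$. Next, for $1\le k\le n-1$, a zero $c_j$ of $P^{+}$ contributes $c_j^{k}$, a zero $d_j$ contributes $3\,d_j^{k}$ and a zero $z_j$ contributes $2\,z_j^{k}$ to $\sum_i (z_i^{+})^{k}$, with the opposite signs for $P^{-}$; hence $\sum_i (z_i^{+})^{k}-\sum_i (z_i^{-})^{k}$ is exactly the left-hand side of \eqref{PolSystem-1}, so \eqref{PolSystem-1} says precisely that these $z_i^{\pm}$ satisfy \eqref{PolEq}. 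As the zero sets of $P^{+}$ and $P^{-}$ are disjoint (all $n+1$ points are distinct), Lemma~\ref{L-PolEq}(ii) supplies $\T_n(z)=\tau z^{n}+\dots\in\PP_n$ with $\T_n-1=\tau P^{+}$, $\T_n+1=\tau P^{-}$ and, by \eqref{tau-1}, $\tau=-2\,(P^{+}(z^{-}))^{-1}$ for any zero $z^{-}$ of $P^{-}$; this is exactly \eqref{Tn-2}, \eqref{tau-2}.

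Next I would identify the objects of Lemma~\ref{L-THU} for this $\T_n$ and invoke Theorem~\ref{T-InvPolImage}. From $\T_n^{2}-1=\tau^{2}\prod_{j=1}^{\nu}(z-c_j)\cdot\prod_{j=1}^{\nu-2}(z-d_j)^{3}\cdot\prod_{j=1}^{n-2\nu+3}(z-z_j)^{2}$ the zeros of odd multiplicity are exactly $c_1,\dots,c_\nu$ (multiplicity $1$) and $d_1,\dots,d_{\nu-2}$ (multiplicity $3$), so $\ell=\nu-1$, $\HH_{2\ell}(z)=\prod_{j}(z-c_j)\prod_{j}(z-d_j)$ and, by \eqref{THU}, $\U_{n-\ell}(z)=\tau\prod_{j}(z-d_j)\prod_{j}(z-z_j)$; a short computation with the logarithmic derivative of $\T_n\mp1=\tau P^{\pm}$ gives $\T_n'=n\tau\prod_{j}(z-d_j)^{2}\prod_{j}(z-z_j)$, hence $\RR_{\ell-1}(z)=\prod_{j}(z-d_j)$ by \eqref{R}. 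Since each $d_j$ and each $z_j$ is a zero of $\T_n^{2}-1$, we have $\T_n(d_j),\T_n(z_j)\in\{-1,1\}\subset[-1,1]$, so all zeros of $\T_n'$ lie in $\Tn$ (which is then connected, by Lemma~\ref{L-Connectedness}), and $c_1,\dots,c_\nu$ are precisely the simple zeros of $\T_n^{2}-1$. Theorem~\ref{T-InvPolImage} then yields that $S=\Tn$ is the P\'olya--Chebotarev continuum for $\{c_1,\dots,c_\nu\}$ with $\CAP S=1/\sqrt[n]{2|\tau|}$; and since in that theorem's proof the common zeros of $\HH_{2\ell}$ and $\U_{n-\ell}$ are here the $d_j$, formula \eqref{dj} shows the bifurcation points are the zeros of $\RR_{\ell-1}$, i.e.\ $d_1,\dots,d_{\nu-2}$.

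Finally, for the arc structure I would use Lemma~\ref{L-InverseImage}: by part~(i), $\Tn$ is a union of $n$ analytic Jordan arcs whose endpoints are the zeros of $\T_n^{2}-1$ counted with multiplicity (each $c_j$ once, each $d_j$ three times, each $z_j$ twice), and at each double zero $z_j$ the two incident arcs conjoin into one analytic arc. Viewing the $n$ arcs as edges and the $n+1$ points $c_j,d_j,z_j$ as vertices yields a connected graph (as $\Tn$ is connected) which is acyclic — a cycle would be a closed curve enclosing a bounded component, contradicting the connectedness of $\C\setminus\Tn$ from part~(ii) — hence a tree with $n+1$ vertices and $n$ edges; suppressing the $n-2\nu+3$ degree-$2$ vertices $z_j$ leaves a tree on $2\nu-2$ vertices, i.e.\ $2\nu-3$ analytic Jordan arcs, with each simple zero $c_j$ an endpoint of exactly one and each triple zero $d_j$ a common endpoint of exactly three at successive angles $2\pi/3$, again by part~(i). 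I expect the only genuine obstacles to be the careful translation of \eqref{PolSystem-1}--\eqref{PolSystem-2} into the hypotheses of Lemma~\ref{L-PolEq}(ii) together with the (implicit) distinctness of all $n+1$ points, and the acyclicity step in the arc count; the extremal and capacity content is inherited wholesale from Theorem~\ref{T-InvPolImage}.
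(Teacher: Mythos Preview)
Your proposal is correct and follows essentially the same route as the paper's proof: recast \eqref{PolSystem-1}--\eqref{PolSystem-2} as the system \eqref{PolEq}, invoke Lemma~\ref{L-PolEq}(ii) to obtain $\T_n$ with \eqref{Tn-2}--\eqref{tau-2}, read off $\HH_{2\ell},\U_{n-\ell},\RR_{\ell-1}$ and the derivative, check connectivity via Lemma~\ref{L-Connectedness}, and finish through Theorem~\ref{T-InvPolImage} and Lemma~\ref{L-InverseImage}. You are more explicit than the paper in two places --- the verification that \eqref{PolSystem-2} forces $\deg P^{+}=\deg P^{-}=n$, and the tree argument for the arc count (which the paper simply attributes to Lemma~\ref{L-InverseImage}) --- and you rightly flag the implicit assumption that all $n+1$ points $c_j,d_j,z_j$ are pairwise distinct, which the paper uses without stating.
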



\begin{proof}
Suppose that \eqref{PolSystem-1} with \eqref{PolSystem-2} holds. By Lemma\,\ref{L-PolEq}, there exists a polynomial $\T_n\in\PP_n$ such that
\begin{equation}\label{Tn-3}
\T_n^2(z)-1=\tau^2\prod_{j=1}^{\nu}(z-c_j)\cdot\prod_{j=1}^{\nu-2}(z-d_j)^3\cdot\prod_{j=1}^{n-2\nu+3}(z-z_j)^2.
\end{equation}
Since each zero of $\T_n^2(z)-1$ of multiplicity $\kappa$ is a zero of $2\T_n(z)\T_n'(z)$ of multiplicity $\kappa-1$ thus a zero of $\T_n'(z)$ of multiplicity $\kappa-1$, we get
\begin{equation}\label{Tn'}
\T_n'(z)=n\tau\prod_{j=1}^{\nu-2}(z-d_j)^2\cdot\prod_{j=1}^{n-2\nu+3}(z-z_j).
\end{equation}
Thus all zeros of $\T_n'(z)$ are zeros of $\T_n^2(z)-1$, i.e.\ lie in $\Tn$, hence, by Lemma\,\ref{L-Connectedness}, $\Tn$ is connected.\\
\indent The representation \eqref{Tn-2} with \eqref{tau-2} of the polynomial $\T_n$ follows from \eqref{Tn-1} with \eqref{tau-1}.\\
\indent By \eqref{Tn-3}, $\T_n^2(z)-1$ has a factorization of the form \eqref{THU} with (note that $c_1,\ldots,c_{\nu}$ and $d_1,\ldots,d_{\nu-2}$ are pairwise distinct by assumption)
\begin{equation}\label{H-1}
\HH_{2\ell}(z):=\prod_{j=1}^{\nu}(z-c_j)\cdot\prod_{j=1}^{\nu-2}(z-d_j),
\end{equation}
i.e.\ $\ell=\nu-1$, and
\begin{equation}\label{U-1}
\U_{n-\ell}(z):=\tau\prod_{j=1}^{\nu-2}(z-d_j)\cdot\prod_{j=1}^{n-2\nu+3}(z-z_j).
\end{equation}
By \eqref{R}, \eqref{Tn'} and \eqref{U-1}, the polynomial $\RR_{\ell-1}(z)$ in Lemma\,\ref{L-THU} is
\[
\RR_{\ell-1}(z)=\prod_{j=1}^{\nu-2}(z-d_j),
\]
thus, by \eqref{H-1},
\[
\frac{\RR_{\ell-1}(z)}{\sqrt{\HH_{2\ell}(z)}}=\frac{\sqrt{\prod_{j=1}^{\nu-2}(z-d_j)}}{\sqrt{\prod_{j=1}^{\nu}(z-c_j)}},
\]
and we continue as in the proof of Theorem\,\ref{T-InvPolImage}.\\
\indent The last statement follows immediately by Lemma\,\ref{L-InverseImage} and the fact that the points $c_1,\ldots,c_{\nu}$ are simple zeros, the points $d_1,\ldots,d_{\nu-2}$ are triple zeros and the points\\ $z_1,\ldots,z_{n-2\nu+3}$ are double zeros of $\T_n^2(z)-1$, respectively.
\end{proof}


\begin{remark}
Consider the simplest case $\nu=3$. In view of \eqref{PolSystem-1} and \eqref{PolSystem-2}, there are 8 possible systems of equations (each with $k=1,\ldots,n-1$):
\begin{align*}
c_1^k+c_2^k+c_3^k+3d_1^k+2\sum_{j=1}^{(n-6)/2}(z_j^+)^k-2\sum_{j=1}^{n/2}(z_j^-)^k&=0\\
c_1^k-c_2^k-c_3^k+3d_1^k+2\sum_{j=1}^{(n-4)/2}(z_j^+)^k-2\sum_{j=1}^{(n-2)/2}(z_j^-)^k&=0\\
c_1^k-c_2^k+c_3^k-3d_1^k+2\sum_{j=1}^{(n-2)/2}(z_j^+)^k-2\sum_{j=1}^{(n-4)/2}(z_j^-)^k&=0\\
c_1^k+c_2^k-c_3^k-3d_1^k+2\sum_{j=1}^{(n-2)/2}(z_j^+)^k-2\sum_{j=1}^{(n-4)/2}(z_j^-)^k&=0\\
c_1^k+c_2^k+c_3^k-3d_1^k+2\sum_{j=1}^{(n-3)/2}(z_j^+)^k-2\sum_{j=1}^{(n-3)/2}(z_j^-)^k&=0\\
c_1^k+c_2^k-c_3^k+3d_1^k+2\sum_{j=1}^{(n-5)/2}(z_j^+)^k-2\sum_{j=1}^{(n-1)/2}(z_j^-)^k&=0\\
c_1^k-c_2^k+c_3^k+3d_1^k+2\sum_{j=1}^{(n-5)/2}(z_j^+)^k-2\sum_{j=1}^{(n-1)/2}(z_j^-)^k&=0\\
c_1^k-c_2^k-c_3^k-3d_1^k+2\sum_{j=1}^{(n-1)/2}(z_j^+)^k-2\sum_{j=1}^{(n-5)/2}(z_j^-)^k&=0
\end{align*}
Note that the first and second 4 systems are possible only for $n$ even and $n$ odd, respectively. In \cite{Sch-2013a}, the case $\nu=3$ is completely solved with the help of Jacobian elliptic and theta functions.
\end{remark}


\begin{remark}
If the assumptions of Theorem\,\ref{T-Computation} are satisfied and $\T_n$ is given by \eqref{Tn-2} with \eqref{tau-2}, by Theorem\,\ref{T-Computation} one can identify $\Tn$ as a \emph{simple ordinary graph} $G$ with no cycles, i.e.\ a \emph{tree}, with the $\nu$ vertices $c_1,\ldots,c_{\nu}$ of degree one, with the $\nu-2$ vertices $d_1,\ldots,d_{\nu-2}$ of degree $3$, and $2\nu-3$ edges (analytic Jordan arcs). The P\'olya-Chebotarev continuum in general can be interpreted as a simple ordinary graph (tree) with no vertices of degree $2$, see \cite[Definition\,1]{OrtegaPridhnani-2010}.
\end{remark}


\begin{example}
Consider the special case of $\nu=4$ points
\[
c_{1,2}=\alpha\pm\ii\beta, \qquad c_{3,4}=-\alpha\pm\ii\beta, \qquad \alpha,\beta>0,
\]
which are the vertices of a rectangle. If $\beta<\alpha$, it is known \cite{GrassmannRokne-1975} that the corresponding P\'olya-Chebotarev continuum is symmetric with respect to the real and imaginary axis and the corresponding bifurcation points $d_1,d_2$ lie on the real axis with $0<d_1<\alpha$ and $d_2=-d_1$, compare Fig.\,\ref{Fig_ChebotarevSet-5}. Without loss of generality, let us fix $\alpha=1$. For $n\in\{5,6,7,8,9\}$, below one can find the corresponding system of equations from \eqref{PolSystem-1} (note that $k=1,\ldots,n-1$) and the corresponding solution. For $n=9$, note that there are two possible systems. For both cases, we have plotted the corresponding inverse polynomial image in Fig.\,\ref{Fig_ChebotarevSet-5}.
\begin{figure}[ht]
\begin{center}
\includegraphics[scale=1.0]{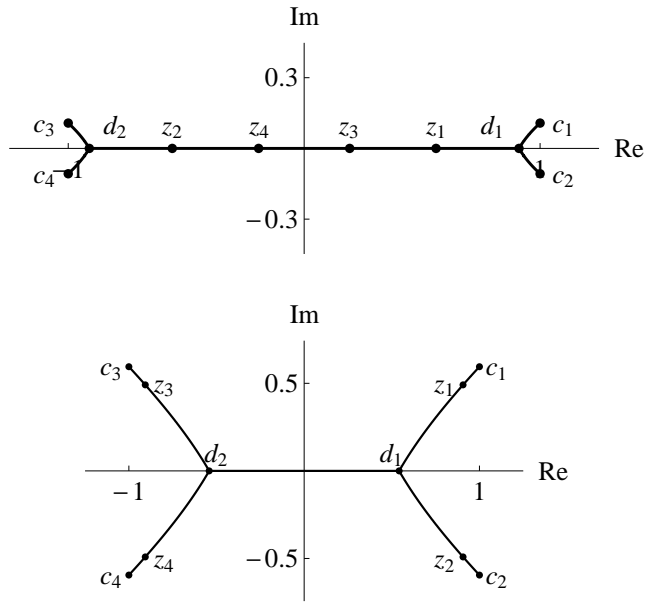}
\caption{\label{Fig_ChebotarevSet-5} Illustration of two P\'olya-Chebotarev continua for $\nu=4$ points which are the vertices of a rectangle}
\end{center}
\end{figure}
\begin{itemize}
\item $n=5$:
\[
\left\{
\begin{aligned}
&c_1^k+c_2^k-c_3^k-c_4^k-3d_1^k+3d_2^k=0\\
&\beta=\tfrac{\sqrt{5}}{3\sqrt{3}}, \quad d_{1,2}=\pm\tfrac{2}{3}
\end{aligned}
\right.
\]
\item $n=6$:
\[
\left\{
\begin{aligned}
&c_1^k+c_2^k+c_3^k+c_4^k-3d_1^k-3d_2^k+2z_1^k=0\\
&\beta=2-\sqrt{3}, \quad d_{1,2}=\pm2\sqrt{2-\sqrt{3}}, \quad z_1=0
\end{aligned}
\right.
\]
\item $n=7$:
\[
\left\{
\begin{aligned}
&c_1^k+c_2^k-c_3^k-c_4^k-3d_1^k+3d_2^k+2z_1^k-2z_2^k=0\\
&\beta\approx0.186748, \quad d_{1,2}\approx\pm0.848275, \quad z_{1,2}\approx\pm0.272412
\end{aligned}
\right.
\]
\item $n=8$:
\[
\left\{
\begin{aligned}
&c_1^k+c_2^k+c_3^k+c_4^k-3d_1^k-3d_2^k+2z_1^k-2z_2^k+2z_3^k=0\\
&\beta\approx0.138701, \quad d_{1,2}\approx\pm0.885782, \quad z_{1,3}\approx\pm0.442891, \quad z_2=0
\end{aligned}
\right.
\]
\item $n=9$:
\[
\left\{
\begin{aligned}
&c_1^k+c_2^k-c_3^k-c_4^k-3d_1^k+3d_2^k+2z_1^k-2z_2^k-2z_3^k+2z_4^k=0\\
&\beta\approx0.10749, \quad d_{1,2}\approx\pm0.910657, \quad z_{1,2}\approx\pm0.558978, \quad z_{3,4}\approx\pm0.192993
\end{aligned}
\right.
\]
\[
\left\{
\begin{aligned}
&c_1^k+c_2^k-c_3^k-c_4^k+3d_1^k-3d_2^k-2z_1^k-2z_2^k+2z_3^k+2z_4^k=0\\
&\beta\approx0.594803, \quad d_{1,2}\approx\pm0.541874, \quad z_{1,2}\approx0.906406\pm0.49118\,\ii,\\
&z_{3,4}\approx-0.906406\pm0.49118\,\ii
\end{aligned}
\right.
\]
\end{itemize}
\end{example}


Finally, let us state a conjecture on a certain density result:


\begin{conjecture}
Let $c_1,\ldots,c_{\nu}\in\C$ be $\nu$ distinct points. For each $\varepsilon>0$ there exists a polynomial $\T_n\in\PP_n$ such that $S=\Tn$ is the P\'olya-Chebotarev continuum for $\{\tilde{c}_1,\ldots,\tilde{c}_{\nu}\}$ with $|c_j-\tilde{c}_j|<\varepsilon$, $j=1,\ldots,\nu$.
\end{conjecture}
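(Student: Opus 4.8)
The plan is to establish the conjecture by a continuity/compactness argument on the space of inverse polynomial images, exploiting Theorem~\ref{T-Computation} to show that such images are plentiful and then arguing that their endpoint configurations fill out a neighbourhood of the prescribed $\{c_1,\ldots,c_\nu\}$. First I would fix the target points $c_1,\ldots,c_\nu$ and recall that, by Gr\"otzsch~\cite{Groetzsch-1930}, the P\'olya-Chebotarev continuum $K$ for $\{c_1,\ldots,c_\nu\}$ exists and is unique; by Theorem~\ref{T-Chebotarev} it carries $\nu-2$ bifurcation points $d_1,\ldots,d_{\nu-2}$ and, as noted in the last remark, is a tree with $\nu$ leaves (the $c_j$) and the $d_j$ as internal vertices, generically each of degree three. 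The heart of the matter is to approximate $K$ itself by inverse polynomial images $\T_n^{-1}([-1,1])$ of increasing degree, and then to read off the perturbed endpoints $\tilde c_j$ from the approximants.

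The key steps, in order, would be: (1) Use the hyperelliptic representation \eqref{Phi} of $K$ together with the factorization of Lemma~\ref{L-THU} to produce, for a dense set of capacities, polynomials whose inverse images are ``close'' to $K$. Concretely, $\re\Phi$ is the Green function of $\ov\C\setminus K$, and one wants to approximate $\exp(-\Phi)$ by $\exp(-\frac1n\log(\T_n+\sqrt{\T_n^2-1}))$; this is the standard idea that Green functions of finitely many arcs with rational harmonic measures are exactly the inverse polynomial images (cf.\ Peherstorfer~\cite{Peh-1996}). (2) Equivalently, and more constructively, work directly with the polynomial system \eqref{PolSystem-1}--\eqref{PolSystem-2}: regard it as defining an analytic variety whose coordinates include $c_1,\ldots,c_\nu$, and show that for suitable sign patterns $\alpha_j,\beta_j,\gamma_j$ and $n$ large this variety, projected to the $c$-coordinates, accumulates at the true configuration. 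The tree structure of $K$ dictates the sign pattern: the multiplicities in \eqref{Tn-3} (simple at the $c_j$, triple at the $d_j$, double at auxiliary points $z_j$ lying along the arcs) mirror exactly the combinatorics of the P\'olya-Chebotarev tree. (3) Show that as $n\to\infty$ one can place the double points $z_j$ more and more densely along the arcs of $K$ so that the resulting $\T_n^{-1}([-1,1])$ converges to $K$ in the Hausdorff metric; continuity of the endpoint data in the Hausdorff limit then yields $|c_j-\tilde c_j|<\varepsilon$. (4) Finally, invoke Theorem~\ref{T-Computation} to conclude that the constructed $\T_n^{-1}([-1,1])$ is genuinely the P\'olya-Chebotarev continuum for its own simple zeros $\tilde c_1,\ldots,\tilde c_\nu$, which completes the proof.

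The main obstacle is step (3): solvability of the nonlinear system \eqref{PolSystem-1} for all large $n$ with the auxiliary points $z_j$ prescribed (or nearly prescribed) on the arcs of $K$, and control of where the simple zeros $\tilde c_j$ land once the system is solved. The system \eqref{PolSystem-1} has $n-1$ equations; the free real parameters are the real and imaginary parts of the $c_j$, $d_j$, $z_j$, so a naive count suggests a positive-dimensional solution set, but one must rule out degeneracies (collisions of zeros, loss of connectivity, the leaf/internal-vertex structure of the tree changing) and must guarantee that the solution branch one follows actually limits onto the configuration attached to $K$ rather than to some other P\'olya-Chebotarev tree on nearby points. A clean way around this would be to parametrize by capacity: since $\CAP S = (2|\tau|)^{-1/n}$ by \eqref{cap}, fixing $\CAP S$ slightly below $\CAP K$ and letting $n\to\infty$ forces $|\tau|\to\infty$ in a controlled way, and one then argues via the continuity of the P\'olya-Chebotarev continuum under perturbation of the defining points (which follows from the uniqueness in~\cite{Groetzsch-1930} and a normal-families argument on the Green functions). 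I would expect the rigorous verification of this last continuity statement, together with the construction showing the relevant solution branch of \eqref{PolSystem-1} is nonempty for every sufficiently large $n$, to be the technically demanding part; the remaining pieces are bookkeeping with Lemmas~\ref{L-THU}, \ref{L-Connectedness} and~\ref{L-InverseImage} and an application of Theorem~\ref{T-Computation}.
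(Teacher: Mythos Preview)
The paper does not prove this statement: it is explicitly labelled a \emph{Conjecture} and is the very last item in the paper, with no argument offered. There is therefore no ``paper's own proof'' to compare your proposal against.

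As for your sketch itself, you are candid that it is a plan rather than a proof, and you have correctly identified the crux: step~(3), the solvability of the nonlinear system \eqref{PolSystem-1}--\eqref{PolSystem-2} for all large $n$ with the solution branch limiting onto the prescribed configuration, is precisely the missing ingredient that makes this a conjecture rather than a theorem. Your heuristic --- rational harmonic measures give inverse polynomial images, and these should be dense among P\'olya--Chebotarev continua --- is the natural one, and indeed motivates the conjecture. But the difficulties you flag (ruling out degenerations, ensuring the tree structure is preserved along the branch, establishing continuity of the P\'olya--Chebotarev continuum in its defining points) are genuine and are not addressed in the paper either. In short, your proposal is a reasonable outline of how one might attack the conjecture, but it does not close the gap, and the paper makes no claim to have closed it.
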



\bibliographystyle{amsplain}
\bibliography{Chebotarev-1}

\end{document}